\newtheorem{theorem}{Theorem}[section]
\newtheorem{lemma}[theorem]{Lemma}
\newtheorem{proposition}{Proposition}[section]
\theoremstyle{definition}
\theoremstyle{remark}
\newtheorem{remark}[theorem]{Remark}
\numberwithin{equation}{section}
\begin{document}
\title[Entropy production estimate for ES-BGK model]{Entropy production estimate for the ES-BGK model with the correct Prandtl number}
\author{Doheon Kim}
\address{School of Mathematics, Korea Institute for Advanced Study, Seoul 02455, Korea (Republic of)}
\email{doheonkim@kias.re.kr}
\author{Myeong-Su Lee }
\address{Department of Mathematics, Sungkyunkwan University, Suwon 440-746, Korea (Republic of)}
\email{msl3573@skku.edu}
\author{SEOK-BAE YUN }
\address{Department of Mathematics, Sungkyunkwan University, Suwon 440-746, Korea (Republic of)}
\email{sbyun01@skku.edu}

\begin{abstract}
In this paper, we establish the entropy-entropy production estimate for the ES-BGK model,
a generalized version of the BGK model of the Boltzmann equation introduced for better approximation in the Navier-Stokes limit.
Our result improves the previous entropy production estimate \cite{Yun Entropy} in that (1) the full range of Prandtl parameters $-1/2\leq\nu <1$ including the critical case $\nu=-1/2$ is covered, and (2)  a sharper entropy production bound is obtained. An explicit characterization of the coefficient of the entropy-entropy production estimate is also presented.  
\end{abstract}
\maketitle
\section{Introduction}
The BGK model \cite{BGK} has been widely used as a model equation of the Boltzmann equation in various practical flow problems since the BGK model produces quantitatively reliable results at a much lower computational cost. 
The Navier-Stokes limit of the BGK model, however, shows
a slight inconsistency with the one derived from the Boltzmann equation or experimental data 
in that the Prandtl number - the ratio between the viscosity and the heat conductivity - computed using the BGK model is not correct.   
Halway \cite{Holway} introduced the ellipsoidal BGK model to overcome this drawback by introducing a parameter $\nu$ and generalizing the local Maxwellian in the original BGK model into an ellipsoidal Gaussian parametrized by $\nu$:  
\begin{align}\label{pde}
\begin{split}
&\partial_tf + v\cdot \nabla_xf = A_\nu(\mathcal{M}_\nu(f)-f),\\
&\hspace{1.2cm}f(x,v,0)=f_0(x,v).
\end{split}
\end{align}
The non-negative function $f(x,v,t)$ is the velocity distribution function representing the mass density  on the position $x\in \Omega\subseteq\mathbb{R}^3$ with the velocity $v\in \mathbb{R}^3$ at time $t\geq0$. $A_\nu$ is the collision frequency defined by $A_\nu=\rho^\alpha T^\beta/(1-\nu)$ for some $\alpha,\beta\geq0$. Throughout this paper, we fix the spatial domain to be $\Omega =$ $\mathbb{R}^3$ or $\mathbb{T}^3$ to avoid boundary issues.

The ellipsoidal Gaussian (or non-isotropic Gaussian) is defined by
\begin{align*}
\mathcal{M}_\nu(f):=\frac{\rho }{\sqrt{\mathrm{det}(2\pi\mathcal{T}_{\nu})}}\exp{\left(-\frac{1}{2}(v-U)^{\top}\mathcal{T}^{-1}_{\nu}(v-U)\right)},
\end{align*}
where the local density $\rho$, the local bulk velocity $U$ and the local temperature $T$ are defined by
\begin{align*}
\rho(t,x)&=\int_{\mathbb{R}^3} f(x,v,t)dv\\
U(t,x)&=\frac{1}{\rho}\int_{\mathbb{R}^3}vf(x,v,t)dv\\
T(t,x)&=\frac{1}{3\rho}\int_{\mathbb{R}^3}|v-U|^2f(x,v,t)dv,
\end{align*}
and the temperature tensor $\mathcal{T}_\nu$ is given by
\begin{align*}
\mathcal{T}_\nu=(1-\nu)TI+\nu\Theta.\qquad (-1/2\leq\nu<1)
\end{align*}
Here, $I$ denotes the 3-by-3 identity matrix, and $\Theta$ denotes the stress tensor:
\begin{align*}
\Theta(t,x)&=\frac{1}{\rho}\int_{\mathbb{R}^3}f(x,v,t)(v-U)\otimes(v-U)dv.
\end{align*}
For later use, we observe that
\begin{align*}
&\hspace{1cm}\mathcal{M}_0(f)=\frac{\rho}{\sqrt{(2\pi T)^3}}e^{-\frac{|v-U|^2}{2T}},\cr
&\mathcal{M}_1(f)=\frac{\rho }{\sqrt{\mathrm{det}(2\pi\Theta)}}\exp{\left(-\frac{1}{2}(v-U)^{\top}\Theta^{-1}(v-U)\right)}.
\end{align*}
Under the assumption of $\mathcal{T}_{\nu}>0$, it can be shown by the change of variable 
$X=\mathcal{T}^{-1/2}_{\nu}(v-U)$ that the following cancellation holds: 
\[
\int_{\mathbb{R}^3}\left\{\mathcal{M}_{\nu}(f)-f \right\}(1,v,|v|^2)dv=0\qquad (-1/2\leq\nu<1),
\]
which leads to the conservation of mass, momentum and energy. It was shown in \cite{ALPP} that the
relaxation operator also satisfies 
\[
\int_{\mathbb{R}^3}\left\{\mathcal{M}_{\nu}(f)-f \right\}\ln fdv\leq 0 
\]
from which the celebrated H-theorem can be derived:
\[
\frac{d}{dt}\int_{\mathbb{R}^3}f\ln f dxdv\leq 0.
\] 
\subsection{Main result: Cercignani type entropy production estimate}
Cercignani conjectured in \cite{Cercignani} that the entropy production term of the Boltzmann equation is controlled by the relative entropy between the distribution function and the corresponding Maxwellian through the so-called entropy-entropy production estimate, which would lead to the exponentially fast
convergence of the distribution function to the global Maxwellian, at least in the homogeneous case. Although the conjecture was disproved in \cite{BC,W}, Villani showed that a nonlinear variant of it can be secured \cite{Villani}.
In the case of the original BGK model, such Cercignani type entropy production estimate can be derived directly from the convexity property of the $H$-functional. The same convexity argument, however, does not lead to the desired result for the ES-BGK model 
since the entropy production functional of the ES-BGK model is bounded only by the relative entropy between the distribution function and the ellipsoidal Gaussian. 
This discrepancy was resolved in the case $-1/2<\nu<1$ in \cite{Yun Entropy}, leaving the critical case $\nu=-1/2$ unanswered. In this paper, we extend the result to cover the critical case $\nu=-1/2$, at which the correct Prandtl number is obtained. 

To state our main result, we define the H-functional $H(f)$, the relative entropy $H(f|g)$, and the entropy production functional $D_\nu(f)$:
\begin{align}\label{funcitonals}
\begin{split}
&H(f)=\int_{\mathbb{R}^3}f\ln fdv,\qquad H(f|g)=\int_{\mathbb{R}^3}f\ln(f/g)dv,\\
&\hspace{1cm} D_\nu(f)=\int_{\mathbb{R}^3}A_\nu(\mathcal{M}_\nu-f)\ln fdv.
\end{split}
\end{align}
Throughout this paper, we define $C_{\nu}$ for $-1/2\leq \nu<1$ by
\begin{align}\label{cnux}
C_\nu=\sup_{x>0 }
\frac{\displaystyle3\ln \left(1+\frac{1}{3}x\right)- \ln \left(1+\frac{1+2\nu}{3}x \right)-2\ln \left(1+\frac{1-\nu}{3}x \right)}{\displaystyle3\ln \left(1+\frac{1}{3}x\right)-  \ln(1+x)}.
\end{align}
Then our main result is as follows:
\begin{theorem}\label{main result}
Let $-1/2\leq \nu<1$. Assume that the solution $f$ to (\ref{pde}) is regular enough so that the functionals in (\ref{funcitonals}) are well-defined, and satisfies $\rho(x,t)>0$ for a.e. $x,t$. 	Then, we have
\begin{enumerate}
\item $C_{\nu}$ satisfies the following bound:
\[
C_{\nu}\leq\frac{1}{3}\,\nu^2(5-2\nu)
\]	
on $-1/2\leq \nu<1$. 
\item For such choice of $C_{\nu}$, Cercignani-type entropy-entropy production estimate holds:
	\begin{align*}
	D_\nu(f)\leq-(1-C_\nu)A_\nu H(f|\mathcal{M}_0)
	\end{align*}
	on $-1/2\leq \nu<1$.
	\end{enumerate}
\end{theorem}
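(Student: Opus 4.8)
The two assertions are essentially independent: (2) is a structural manipulation of the $H$-functionals followed by a finite-dimensional optimization, whereas (1) is a self-contained estimate on the explicit quantity \eqref{cnux}. I would begin with (2). The algebraic observation is that $\mathcal M_\nu$ and $f$ share the moments against $1$, $v$, and $|v-U|^2$ — the last one because $\mathrm{tr}\,\mathcal T_\nu=3T$ — and $\ln\mathcal M_0$ is an affine combination of exactly these, so $\int_{\mathbb R^3}(\mathcal M_\nu-f)\ln\mathcal M_0\,dv=0$. Consequently
\[
D_\nu(f)=A_\nu\!\int_{\mathbb R^3}(\mathcal M_\nu-f)\ln\frac f{\mathcal M_0}\,dv=A_\nu\!\int_{\mathbb R^3}\mathcal M_\nu\ln\frac f{\mathcal M_0}\,dv-A_\nu H(f|\mathcal M_0),
\]
and splitting $\ln(f/\mathcal M_0)=\ln(f/\mathcal M_\nu)+\ln(\mathcal M_\nu/\mathcal M_0)$ together with $\int\mathcal M_\nu\ln(f/\mathcal M_\nu)\,dv=-H(\mathcal M_\nu|f)\le0$ (both have mass $\rho$) yields $D_\nu(f)\le A_\nu H(\mathcal M_\nu|\mathcal M_0)-A_\nu H(f|\mathcal M_0)$. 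Since $C_\nu\ge0$ (numerator and denominator in \eqref{cnux} are both nonnegative for $x>0$ by concavity of $\ln$), (2) reduces to the single inequality $H(\mathcal M_\nu|\mathcal M_0)\le C_\nu H(f|\mathcal M_0)$.

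To prove that, first use the Gaussian extremality of entropy in the form $H(f|\mathcal M_0)=H(\mathcal M_1|\mathcal M_0)+H(f|\mathcal M_1)\ge H(\mathcal M_1|\mathcal M_0)$, valid because $f$ and $\mathcal M_1$ carry the same $\rho,U,\Theta$; it then suffices to show $H(\mathcal M_\nu|\mathcal M_0)\le C_\nu H(\mathcal M_1|\mathcal M_0)$. Both sides are explicit: if $\lambda_1,\lambda_2,\lambda_3>0$ are the eigenvalues of $\Theta$ (so $\sum_i\lambda_i=3T$) and $\mu_i=(1-\nu)T+\nu\lambda_i$ those of $\mathcal T_\nu$ (also summing to $3T$), a Gaussian computation gives $H(\mathcal M_1|\mathcal M_0)=\tfrac\rho2\ln\!\big(T^3/(\lambda_1\lambda_2\lambda_3)\big)$ and $H(\mathcal M_\nu|\mathcal M_0)=\tfrac\rho2\ln\!\big(T^3/(\mu_1\mu_2\mu_3)\big)$, so the claim becomes
\[
\ln\frac{T^3}{\mu_1\mu_2\mu_3}\le C_\nu\,\ln\frac{T^3}{\lambda_1\lambda_2\lambda_3}\qquad\text{for all admissible }\lambda_1,\lambda_2,\lambda_3.
\]
Equivalently, with $\psi_C(\lambda):=-\ln(1-\nu+\nu\lambda/T)+C\ln(\lambda/T)$, one needs $\sum_i\psi_{C_\nu}(\lambda_i)\le0$ on $\{\lambda_i>0,\ \sum_i\lambda_i=3T\}$. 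A Lagrange-multiplier argument shows $\psi_C'(\lambda)=c$ becomes a quadratic in $\lambda$ after clearing denominators, so at an interior critical point the $\lambda_i$ take at most two distinct values; hence the maximum of $\sum_i\psi_C(\lambda_i)$ is attained either at $\lambda_1=\lambda_2=\lambda_3$ (where it is $0$) or at a two-equal configuration. Writing such a configuration as $\lambda_1=T(1+x)/(1+x/3)$, $\lambda_2=\lambda_3=T/(1+x/3)$ with $x>-1$, one checks that $\sum_i\psi_C(\lambda_i)$ equals exactly $\mathrm{num}(x)-C\,\mathrm{denom}(x)$, where $\mathrm{num}$ and $\mathrm{denom}$ denote the numerator and denominator of \eqref{cnux}. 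For $x>0$ this is $\le0$ iff $C\ge C_\nu$; verifying in addition that the range $-1<x<0$ gives no larger ratio then yields $\sum_i\psi_{C_\nu}(\lambda_i)\le0$ and finishes (2). At $\nu=-1/2$ the supremum in \eqref{cnux} is attained only in the limit $x\to\infty$, where $\mathcal T_\nu$ degenerates, so there the estimate is obtained on the non-degenerate region and extended by continuity; this is exactly the borderline case left open in \cite{Yun Entropy}.

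For (1) I would prove $\mathrm{num}(x)\le\tfrac13\nu^2(5-2\nu)\,\mathrm{denom}(x)$ for every $x>0$ directly. Both sides vanish to second order at $x=0$, with ratio of $x^2$-coefficients equal to $\nu^2\le\tfrac13\nu^2(5-2\nu)$ (as $\nu<1$), and as $x\to\infty$ the left side stays bounded while the right grows like $\log x$, so only the intermediate regime is at issue. Setting $g(x)=\tfrac13\nu^2(5-2\nu)\,\mathrm{denom}(x)-\mathrm{num}(x)$ one has $g(0)=g'(0)=0$ and $g''(0)=\tfrac49\nu^2(1-\nu)>0$, and after clearing denominators $g'$ is a rational function whose numerator can be shown to have a single sign on $x>0$, whence $g$ is monotone and $g\ge0$. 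The step I expect to be the real obstacle is the sharp optimization in (2): showing that the one-parameter family \eqref{cnux}, with supremum taken only over $x>0$, already captures the full supremum over all eigenvalue configurations and the whole range $-1/2\le\nu<1$ — in particular handling the degenerate extremizer at the critical value $\nu=-1/2$, which is what makes that case genuinely new. The calculus in (1) is comparatively routine, though the sign analysis of $g'$ still needs to be carried out carefully.
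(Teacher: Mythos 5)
Your overall architecture for (2) is sound and essentially parallels the paper's: the chain $D_\nu(f)\le A_\nu\{H(\mathcal M_\nu)-H(f)\}$, the Pythagorean splitting against $\mathcal M_0$ and $\mathcal M_1$, and the reduction of $H(\mathcal M_\nu|\mathcal M_0)\le C_\nu H(\mathcal M_1|\mathcal M_0)$ (which is exactly Lemma \ref{main lemma}) to an eigenvalue optimization that collapses to two-equal configurations and then to the one-variable quantity \eqref{cnux}. The substantive gap is the step you dismiss with ``verifying in addition that the range $-1<x<0$ gives no larger ratio.'' Your Lagrange-multiplier argument produces \emph{two} two-equal branches --- two smaller eigenvalues equal ($x>0$) and two larger eigenvalues equal ($-1<x<0$) --- while the definition \eqref{cnux} takes the supremum over $x>0$ only, so without eliminating the second branch the reduction is incomplete. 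The paper does this via Lemma \ref{SPlemma}: writing $\prod_i\{(1-\nu)\tfrac S3+\nu\theta_i\}$ in elementary symmetric functions, the coefficient of $e_2=\sum_{i<j}\theta_i\theta_j$ is $\nu^2(1-\nu)\tfrac S3\ge0$, so for fixed $S$ and $P$ the numerator of $F_\nu$ is maximized where $e_2$ is \emph{minimal}, and the cited extremal result says this happens precisely at the two-\emph{smaller}-equal configuration, i.e.\ $x>0$. Some such monotonicity argument (or a direct comparison of the two branches) must be supplied; it is not a formality.

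For (1), what you have written is a plan rather than a proof: the assertion that the numerator of $g'$ ``can be shown to have a single sign on $x>0$'' is the entire content of the statement and is left unverified (numerically it appears true, i.e.\ $G'_{\nu,1}(x)/G'_{\nu,2}(x)\le\tfrac13\nu^2(5-2\nu)$ on $x>0$, but establishing this is exactly the hard part). Moreover your claim that the numerator of \eqref{cnux} ``stays bounded as $x\to\infty$'' fails at the critical value $\nu=-1/2$, where $1+2\nu=0$ and the numerator grows like $\ln x$; there the ratio tends to exactly $\tfrac12=\widetilde C_{-1/2}$, so the large-$x$ regime cannot be brushed aside --- and this is precisely the case the theorem is meant to cover. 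The paper avoids all of this with a short algebraic identity: it decomposes $\bigl(1+\tfrac{1+2\nu}3x\bigr)\bigl(1+\tfrac{1-\nu}3x\bigr)^2$ as a convex combination, with weights $\tfrac13\nu^2(1+2\nu)$, $\tfrac83\nu^2(1-\nu)$, $(1+2\nu)(1-\nu)^2$ summing to $1$, of $(1+x)$, $\bigl(1+x+\tfrac14x^2\bigr)$ and $\bigl(1+\tfrac x3\bigr)^3$, bounds the middle term below by $(1+x)^{1/2}\bigl(1+\tfrac x3\bigr)^{3/2}$, and applies weighted AM--GM to obtain the equivalent exponentiated form of the inequality directly. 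You would need either to carry out the sign analysis of $g'$ in full, including the degenerate case $\nu=-1/2$, or to find such a closed-form argument.
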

\begin{remark}
	(1) We note that $\nu^2(5-2\nu)/3<1$ in the given range of $\nu$. (2)
For example, $f(1+|v|^2+|\ln f|)\in L^1([0,\infty)_t\times\Omega_x\times\mathbb{R}^3_v)$ and $f\geq ae^{-b|v|^2}$ for some positive constants $a$ and $b$, is sufficient for the assumption of the theorem to be satisfied.
\end{remark}
Theorem \ref{main result} improves the previous results on the entropy-entropy production estimate obtained in \cite{Yun Entropy} in the following senses: First, it covers the whole range of the Prandtl parameters: $-1/2\leq \nu<1$ including the critical case $-1/2$, while the estimate in \cite{Yun Entropy} is valid only in $-1/2<\nu<1$.
Secondly, the explicit bound $\nu^2(5-2\nu)/3$ on $C_{\nu}$ is less than the corresponding coefficient  $\max\{2\nu,-\nu\}$ obtained in \cite{Yun Entropy}, with the equality holding only at $\nu=0$ (See Figure 1): 
\[
\frac{1}{3}\nu^2(5-2\nu)\leq \max\{-2\nu,\nu\},
\]
which leads to a sharper entropy-entropy production estimate.
We also note in the figure 1 below that there is a strict gap between $C_{\nu}$ and  $\max\{-2\nu,\nu\}$ at $\nu=-1/2$.
With the choice of $\max\{2\nu,-\nu\}$ in place of $C_{\nu}$, the entropy production estimate given in Theorem \ref{main result} (2) become degenerate at the critical case $\nu=-1/2$ since $1-\max\{2\nu,-\nu\}=\min\{1+2\nu,1-\nu \}=0$ at $\nu=-1/2$.  
\begin{center}
	\includegraphics[height=70mm, width=95mm]{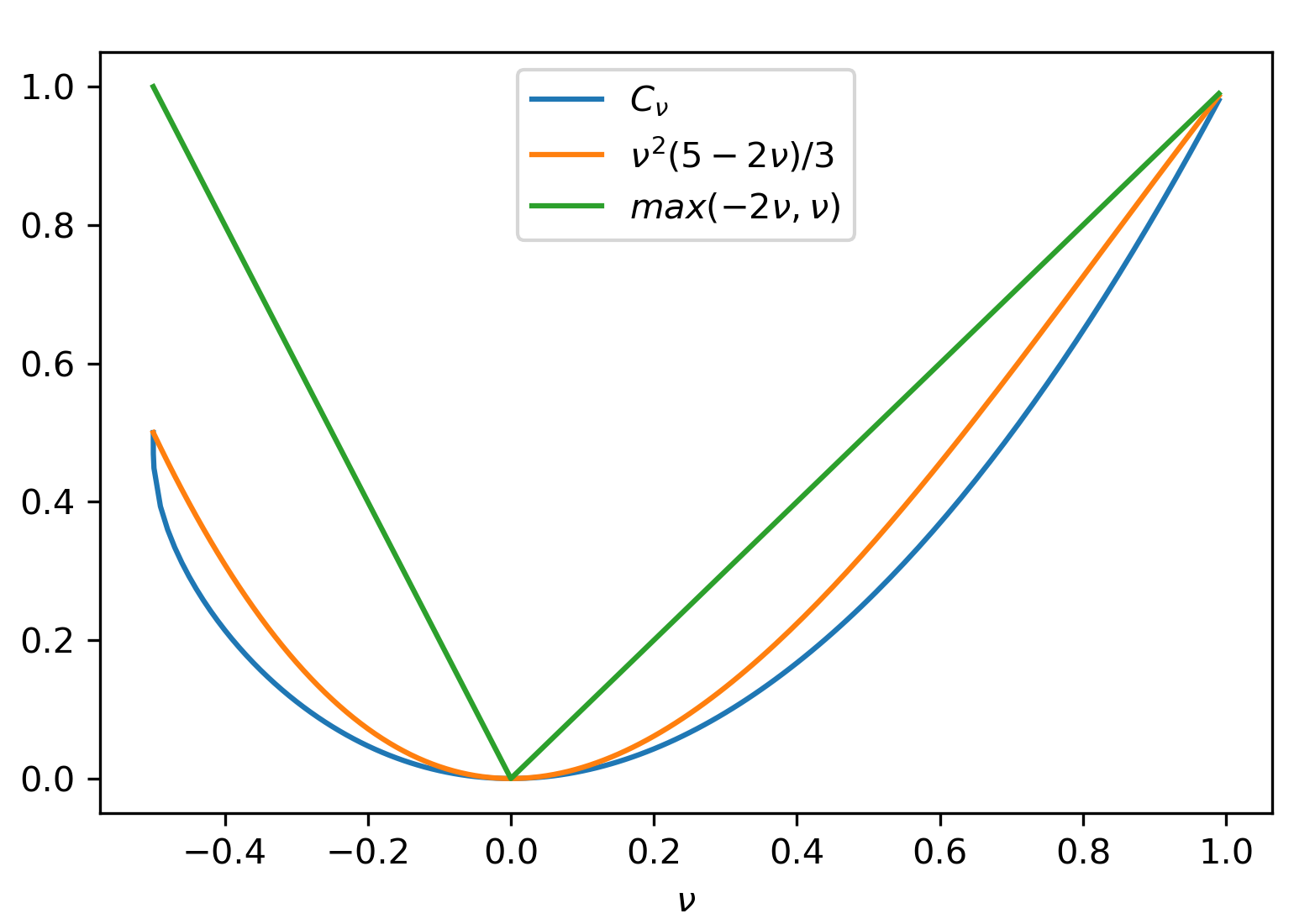}
	\captionof{figure}{Comparison of coefficients}
\end{center}

The starting point of the proof is the observation that the establishment of the following inequality with $0\leq C_{\nu}<1$ is enough to derive the desired result (Lemma \ref{main lemma}):
	\begin{align}\label{we re0}
H(\mathcal{M}_\nu)-H(\mathcal{M}_0)\leq C_\nu\{H(\mathcal{M}_1)-H(\mathcal{M}_0)\}.
\end{align}
We prove this inequality by showing that the range of the following quantity
\begin{align}\label{we re}
\frac{H(\mathcal{M}_\nu)-H(\mathcal{M}_0)}{H(\mathcal{M}_1)-H(\mathcal{M}_0)}
\end{align}
lies in the  interval $\big[0,C_{\nu}\big)$ under reasonable assumptions.
For this, we first rewrite \eqref{we re} as
\begin{align}\label{we re2}
F_{\nu}(\theta_1,\theta_2,\theta_3):=\ln\left(\frac{\prod_{i=1}^{3}\{(1-\nu)\frac{\theta_1+\theta_2+\theta_3}{3}+\nu\theta_i\}}{(\frac{\theta_1+\theta_2+\theta_3}{3})^3}\right)\bigg/  \ln\left(\frac{\theta_1\theta_2\theta_3}{(\frac{\theta_1+\theta_2+\theta_3}{3})^3}\right),
\end{align}
using the eigenfunctions $\theta_i$ $(i=1,2,3)$ of the stress tensor $\Theta$, and  formulate the problem of proving the inequality (\ref{we re0}) into the problem of solving the maximization problem of this three-variable function:
\begin{equation*}
\sup_{\substack{\theta_1,\theta_2,\theta_3>0\\ \exists~i,j:~\theta_i\neq\theta_j}}F_{\nu}(\theta_1,\theta_2,\theta_3).
\end{equation*}
 We then 
reduce this three-variable optimization problem into a two-variable problem:
\begin{equation*}
\sup_{\substack{\theta_1,\theta_2>0\\ \theta_1>\theta_2} }F_{\nu}(\theta_1,\theta_2,\theta_2)
\end{equation*}
 by observing a hidden relation of this problem with the following elementary question (See Lemma \ref{SPlemma}.):
\begin{center}
``When does $xy+yz+zx$ attains its minimum with $x+y+z$ and $xyz$ fixed$\,?$"
\end{center}
 Finally, the following simple scaling property of the 
functional \eqref{we re2}:
\[
	F_{\nu}(\theta_1,\theta_2,\theta_3)=F_{\nu}(k\theta_1,k\theta_2,k\theta_3),\quad\forall~k>0,
	\] 
together with a proper choice of $k$ enable one to  reduce the optimization problem further into a much manageable problem involving only one variable given in \eqref{cnux}, from which
various useful information on the entropy-entropy production estimate  can be derived.

\subsection{Literature} The ES-BGK model was suggested by Halway \cite{Holway} to remedy the incorrect production of transport coefficients of the original BGK model \cite{BGK}.
It was, however, somewhat forgotten in the literature since it was not known at the time whether the H-theorem holds for the ES-BGK model.
The proof of the H-theorem was provided in \cite{ALPP}, due to which the model got popularized in the kinetic community \cite{AAS,ABLP,CXC comparison,GT, KA,MWRZ,MZHRS,MS}. A systematic derivation of the model is considered in \cite{Brull Jac ES}. ES-BGK model for gas mixture is suggested in \cite{Brull ES poly,GMS ES,MK}.  Several numerical methods for the ES-BGK model were developed in \cite{ABLP,FJ,Russo Yun}. 

The existence theory was considered in various contexts: near-equilibrium regime \cite{Li Sun,Yun SIAM},
weak solutions subject to general initial data \cite{P Yun JMP,Zhang Hu}, mild solutions with strong decay in velocity  \cite{Chen,P Yun JMP},  
and stationary flows \cite{Bang Yun,Brull Yun}. The entropy-entropy production estimate was derived in \cite{Yun Entropy} for $-1/2<\nu<1$. Entropic properties below the correct Prandtl number $3/2$ are considered in \cite{THM}. For relevant results on polyatomic generalizations of the ES-BGK model, we refer to \cite{BCRY,KA,KKA,P Yun JDE,P Yun AML,Yun Dichotomy}.\newline

The paper is organized as follows. In Section 2, we prove the main Theorem assuming the validity of Lemma \ref{main lemma}. Then the proof of Lemma \ref{main lemma} is given in Section 3. In Section 4, the point on which the $C_{\nu}$ is attained is characterized.

%
%
%
%
\section{Proof of the main theorem}
\subsection{Proof of Theorem \ref{main result} (1)}
	For simplicity, we denote
	\[
	\widetilde{C}_{\nu}=\frac{1}{3}\nu^2(5-2\nu).
	\]
	Then, it suffices to prove that
	\begin{align}\label{inequal}
	\frac{\displaystyle3\ln \left(1+\frac{1}{3}x\right)- \ln \left(1+\frac{1+2\nu}{3}x \right)-2\ln \left(1+\frac{1-\nu}{3}x \right)}{\displaystyle3\ln \left(1+\frac{1}{3}x\right)-  \ln(1+x)}\leq \widetilde C_\nu\quad(\forall~x>0),
	\end{align}
	or equivalently,
	\begin{equation}\label{equiv}
	\left(1+\frac{1+2\nu}{3}x \right)\left(1+\frac{1-\nu}{3}x \right)^2\geq (1+x)^{\widetilde C_\nu}\left(1+\frac{1}{3}x\right)^{3(1-\widetilde C_\nu)}\quad(\forall~x>0).
	\end{equation}

	For this, we compute
	\begin{align}\label{from below}
	\begin{split}
	&\left(1+\frac{1+2\nu}{3}x \right)\left(1+\frac{1-\nu}{3}x \right)^2\\
	&\qquad=1+ x+\frac{1-\nu^2}{3}x^2+\left(\frac{1+2\nu}{3} \right)\left(\frac{1-\nu}{3} \right)^2x^3\\
	&\qquad=(3\nu^2-2\nu^3)+ (3\nu^2-2\nu^3)x+\frac{1}{3}(2\nu^2-2\nu^3)x^2 +(1+2\nu)(1-\nu)^2\left(1+\frac{1}{3}x\right)^3\\
	&\qquad=\frac{1}{3}\nu^2\left((9 -6\nu)(1+x)+ (2 -2\nu)x^2\right)  +(1+2\nu)(1-\nu)^2\left(1+\frac{1}{3}x\right)^3\\
	&\qquad=\frac{1}{3}\nu^2 (1 +2\nu)(1+x)+ \frac{8}{3}\nu^2 (1-\nu)\left(1+x+\frac{1}{4}x^2\right)  +(1+2\nu)(1-\nu)^2\left(1+\frac{1}{3}x\right)^3.
	\end{split}
	\end{align}
	We then observe
	\[
	\left(1+x+\frac{1}{4}x^2\right)^2-(1+x) \left(1+\frac{1}{3}x\right)^3=\frac{1}{6}x^2+\frac{7}{54}x^3+\frac{11}{432}x^4>0
	\]
	to bound the last line of \eqref{from below} from below by
	\begin{align*}
	\frac{1}{3}\nu^2 (1 +2\nu)(1+x)+ \frac{8}{3}\nu^2 (1-\nu)(1+x)^\frac{1}{2}\left(1+\frac{1}{3}x\right)^\frac{3}{2}  +(1+2\nu)(1-\nu)^2\left(1+\frac{1}{3}x\right)^3,
	\end{align*}
	which, with the help of the weighted AM-GM inequality, can be bounded further from below by
	\begin{align*}
	&(1+x)^{\frac{1}{3}\nu^2 (1 +2\nu)} \left[(1+x)^\frac{1}{2}\left(1+\frac{1}{3}x\right)^\frac{3}{2}\right]^{\frac{8}{3}\nu^2 (1-\nu)}  \left[\left(1+\frac{1}{3}x\right)^3\right]^{(1+2\nu)(1-\nu)^2}\\
	&\qquad=(1+x)^{\frac{1}{3}\nu^2 (5-2\nu)}  \left(1+\frac{1}{3}x\right)^{3-5\nu^2+2\nu^3}\cr
	&\qquad= (1+x)^{\widetilde C_\nu}\left(1+\frac{1}{3}x\right)^{3(1-\widetilde C_\nu)},
	\end{align*}
	which gives (\ref{equiv}).
\begin{remark}
	We mention that the equality holds when $\nu=0$, since both sides of the inequality (\ref{inequal}) reduces to 0 at $\nu=0$.
	In Section 4, we will prove that the equality holds also in the case $\nu=-1/2$.
\end{remark}

\subsection{Proof of Theorem \ref{main result} (2)}
We now move on to the proof of the entropy-entropy production estimate. The following lemma that compares the differences of H-functionals of various Maxwellians, is the key element 
in the proof.
\begin{lemma}\label{main lemma}
	With the $C_{\nu}$ defined in Theorem \ref{main result}, we have
	\begin{align*}
	H(\mathcal{M}_\nu)-H(\mathcal{M}_0)\leq C_\nu\{H(\mathcal{M}_1)-H(\mathcal{M}_0)\},
	\end{align*}
	for $-1/2\leq\nu<1$.
\end{lemma}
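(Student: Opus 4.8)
The plan is to rewrite the inequality in terms of the eigenvalues of the stress tensor $\Theta$, reduce it to the one-variable maximization problem \eqref{cnux}, and then invoke Theorem~\ref{main result}(1). To begin, I would record that for any Gaussian $g$ of mass $\rho$, bulk velocity $U$ and positive-definite covariance $\Sigma$, a direct computation gives
\[
H(g)=\rho\ln\rho-\tfrac{\rho}{2}\ln\det(2\pi\Sigma)-\tfrac{3\rho}{2}
\]
(the quadratic term contributes $-\tfrac{3\rho}{2}$ because $\int g\,(v-U)^{\top}\Sigma^{-1}(v-U)\,dv=3\rho$). Since $\mathcal M_0$, $\mathcal M_1$, $\mathcal M_\nu$ share the same $\rho$ and $U$ and carry covariances $TI$, $\Theta$, and $\mathcal T_\nu=(1-\nu)TI+\nu\Theta$ respectively, the $\rho\ln\rho$ and $-\tfrac{3\rho}{2}$ terms cancel in every difference, leaving
\begin{align*}
H(\mathcal M_\nu)-H(\mathcal M_0)&=-\tfrac{\rho}{2}\ln\frac{\det\mathcal T_\nu}{T^{3}},\\
H(\mathcal M_1)-H(\mathcal M_0)&=-\tfrac{\rho}{2}\ln\frac{\det\Theta}{T^{3}}.
\end{align*}
Writing $\theta_1,\theta_2,\theta_3>0$ for the eigenvalues of $\Theta$, one has $T=\tfrac13(\theta_1+\theta_2+\theta_3)$, while the eigenvalues of $\mathcal T_\nu$ are $(1-\nu)T+\nu\theta_i$ and again average to $T$; hence AM--GM gives $\det\Theta\le T^{3}$ and $\det\mathcal T_\nu\le T^{3}$, so both differences above are nonnegative. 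If $\theta_1=\theta_2=\theta_3$ then $\Theta=TI$, the three Gaussians coincide, and the asserted inequality reads $0\le 0$; otherwise $\det\Theta<T^{3}$, so $H(\mathcal M_1)-H(\mathcal M_0)>0$ and, dividing, the lemma becomes $F_\nu(\theta_1,\theta_2,\theta_3)\le C_\nu$ with $F_\nu$ the function in \eqref{we re2}.

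Next I would reduce the three variables to one using the scaling invariance $F_\nu(k\theta_1,k\theta_2,k\theta_3)=F_\nu(\theta_1,\theta_2,\theta_3)$. Let $e_1,e_2,e_3$ be the elementary symmetric polynomials in the $\theta_i$. The denominator of $F_\nu$ depends only on $(e_1,e_3)$, and expanding $\prod_i\big((1-\nu)\tfrac{e_1}{3}+\nu\theta_i\big)=a^{3}+a^{2}\nu e_1+a\nu^{2}e_2+\nu^{3}e_3$ with $a=(1-\nu)e_1/3>0$ shows its numerator is nondecreasing in $e_2$, since the coefficient $a\nu^{2}$ of $e_2$ is nonnegative; as numerator and denominator are both $\le 0$, $F_\nu$ is therefore nonincreasing in $e_2$ at fixed $(e_1,e_3)$. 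Consequently $\sup F_\nu$ over the $e_2$-range is attained at the minimal admissible $e_2$, which by the elementary Lemma~\ref{SPlemma} occurs at a triple with two equal entries, the repeated entry being the smaller of the two distinct values; rescaling that triple so the repeated entry equals $1$ puts it in the form $(1+x,1,1)$ with $x>0$, and letting $(e_1,e_3)$ vary makes $x$ sweep out $(0,\infty)$. Thus $\sup_{\theta_i>0,\ \text{not all equal}}F_\nu=\sup_{x>0}F_\nu(1+x,1,1)$. A last short computation — with $T=1+\tfrac{x}{3}$ and eigenvalues of $\mathcal T_\nu$ equal to $1+\tfrac{1+2\nu}{3}x$ and, with multiplicity two, $1+\tfrac{1-\nu}{3}x$ — identifies $F_\nu(1+x,1,1)$ with exactly the quotient whose supremum over $x>0$ is $C_\nu$ by \eqref{cnux}. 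Hence $F_\nu\le C_\nu$ always; combined with the trivial equal-eigenvalue case this proves the lemma, and one has $C_\nu<1$ by Theorem~\ref{main result}(1) and the remark following it, so the constant $C_\nu$ is admissible.

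I expect the reduction from three variables to two to be the only genuinely delicate step. The monotonicity of $F_\nu$ in $e_2$ is a routine expansion, but what matters is the location of the extremal triple: one must know not merely that $e_2$, at fixed $e_1$ and $e_3$, is minimized by a triple with two equal entries, but that it is the triple whose coinciding entries form the smaller pair, so that the normalization lands on $(1+x,1,1)$ with $x>0$ — the range appearing in \eqref{cnux} — and not with $x\in(-1,0)$. This is precisely the content of Lemma~\ref{SPlemma} on the extrema of $xy+yz+zx$ subject to fixed $x+y+z$ and $xyz$. Once that is available, the remaining work is bookkeeping with Gaussian entropies and with the scaling invariance of $F_\nu$.
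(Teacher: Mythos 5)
Your proposal is correct and follows essentially the same route as the paper: rewrite the entropy differences via the eigenvalues of $\Theta$ (you re-derive the Gaussian entropy formula where the paper cites Lemma 2.2 of \cite{Yun Entropy}), use the monotonicity of the product in $e_2=\theta_1\theta_2+\theta_2\theta_3+\theta_3\theta_1$ together with Lemma \ref{SPlemma} to reduce to a triple of the form $(1+x,1,1)$ with $x>0$, and identify the resulting one-variable supremum with $C_\nu$ from \eqref{cnux}, bounded by Theorem \ref{main result}(1). The delicate point you flag — that the $e_2$-minimizer repeats the \emph{smaller} entry, so the normalization lands on $x>0$ — is exactly the role Lemma \ref{SPlemma} and the following remark play in the paper's Step II.
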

\begin{remark}
This improves the corresponding estimate in \cite{Yun Entropy} in that (1)  the critical case $\nu=-1/2$ is covered, and (2) $C_{\nu}$ is sharper than the one obtained in \cite{Yun Entropy}.
\end{remark}

Assuming the validity of the Lemma \ref{main lemma}, we can prove the main theorem as follows.
First we use the convexity of $x\ln x$ and $\frac{d}{dx}(x\ln x)=1+\ln x$ to get 
		\begin{align*}
D_{\nu}(f)=\int A_\nu (1+\ln f)(\mathcal{M}_{\nu}-f)
\leq A_\nu\{H(\mathcal{M}_{\nu})-H(f)\}.
\end{align*}
Thanks to Lemma \ref{main lemma}, the R.H.S. can be treated as 
\begin{align}\label{last line}
\begin{split}
H\big(\mathcal{M}_{\nu}\big)-H(f)
&=-\{H(f)-H(\mathcal{M}_{0})\}+\{H(\mathcal{M}_{\nu})-H(\mathcal{M}_{0}))\}\cr
&\leq -H(f|\mathcal{M}_{0})+
C_{\nu}\left\{H(\mathcal{M}_{1})-H(\mathcal{M}_{0})\right\}.
\end{split}
\end{align}

Then we recall the following inequality from \cite{ALPP,Yun Entropy}
\[
H(\mathcal{M}_{0})\leq H(\mathcal{M}_{1})\leq H(f).
\]
to bound the last line of (\ref{last line}) by
\begin{align*}
& -H(f|\mathcal{M}_{0})+C_{\nu}\left\{H(f)-H(\mathcal{M}_{0})\right\}=-(1-C_{\nu})H(f|\mathcal{M}_{0}),
\end{align*} 
which completes the proof.
The proof of Lemma \ref{main lemma} is considered in the next section.
%
%
%
%
%
\section{Proof of Lemma \ref{main lemma}}
The proof of Lemma \ref{main lemma} is obtained by first formulating it as a three-variable  optimization problem and  then reducing it into a one-variable problem. As such, we divide this  section into several steps.
\subsection{Step 1: Reformulation as a maximization problem}
Let  $\theta_i$ ($i=1,2,3$) be three eigenfunctions of the symmetric matrix $\Theta$.\newline

\noindent$\bullet$ Case i) $\theta_1=\theta_2=\theta_3$:
In this case, $\theta_i=T$, so that the temperature tensor $\mathcal{T}_{\nu}$ reduces to $TI$ and $\mathcal{M}_{\nu}$ reduces to $\mathcal{M}_0$, from which the desired inequality follows trivially.\newline

\noindent$\bullet$ Case ii) At least one of $\theta_i$ takes a different value: The assumption $\rho>0$ implies that 
\[
\kappa^{\top}\Theta\kappa=\int_{\mathbb{R}^3}f(x,v,t)\{(v-U)\cdot\kappa\}^2dv>0\qquad \kappa\in\mathbb{R}^3.
\]
This gives $\theta_i>0$.
First, we recall from  Lemma 2.2 in \cite{Yun Entropy} that
\begin{align*}
H(\mathcal{M}_0)-H(\mathcal{M}_\nu)&=\frac{\rho}{2}\ln{\frac{\prod_{i=1}^{3}\{(1-\nu)\frac{\theta_1+\theta_2+\theta_3}{3}+\nu\theta_i\}}{(\frac{\theta_1+\theta_2+\theta_3}{3})^3}},\\
H(\mathcal{M}_0)-H(\mathcal{M}_1)&=\frac{\rho}{2}\ln{\frac{\theta_1\theta_2\theta_3}{(\frac{\theta_1+\theta_2+\theta_3}{3})^3}}.
\end{align*}

Therefore, the inequality in Lemma \ref{main lemma}:
\begin{align*}
 H(\mathcal{M}_0)-H(\mathcal{M}_\nu)\geq C_\nu\{H(\mathcal{M}_0)-H(\mathcal{M}_1)\},
\end{align*}
can be rewriten as
\begin{equation}\label{B-1}
 \ln{\frac{\prod_{i=1}^{3}\{(1-\nu)\frac{\theta_1+\theta_2+\theta_3}{3}+\nu\theta_i\}}{(\frac{\theta_1+\theta_2+\theta_3}{3})^3}}\geq  C_\nu\ln{\frac{\theta_1\theta_2\theta_3}{(\frac{\theta_1+\theta_2+\theta_3}{3})^3}}.
\end{equation}
Here we divided both sides by $\rho$ using the assumption $\rho\neq0$.
From our assumption on $\theta_i$ that at least one pair of $\theta_i$ takes a different value, and the
fact that the equality in the Inequality of arithmetic and geometric means holds only when all the variables take the same value, the R.H.S. of (\ref{B-1}) is strictly less than zero.
Therefore (\ref{B-1}) is equivalent to 
\[
\ln{\frac{\prod_{i=1}^{3}\{(1-\nu)\frac{\theta_1+\theta_2+\theta_3}{3}+\nu\theta_i\}}{(\frac{\theta_1+\theta_2+\theta_3}{3})^3}}\bigg/  \ln{\frac{\theta_1\theta_2\theta_3}{(\frac{\theta_1+\theta_2+\theta_3}{3})^3}} \leq C_\nu.
\]
From this, we conclude that, to derive the desired inequality, it is enough to study the range of the functional $F_\nu$ defined by
	\begin{align*}
\begin{split}
F_{\nu}(\theta_1,\theta_2,\theta_2)=  \frac{3\ln \left(\frac{\theta_1+\theta_2+\theta_3}{3}\right)-\ln \left\{\prod\limits_{i=1}^3\big((1-\nu)\frac{\theta_1+\theta_2+\theta_3}{3}+\nu\theta_i\big)\right\} }{3\ln \left(\frac{\theta_1+\theta_2+\theta_3}{3}\right)- \ln(\theta_1\theta_2\theta_3)}.
\end{split}
\end{align*}
More precisely, it is sufficient to prove that the following identity holds:
\begin{equation}\label{cdef}
C_\nu=\sup_{\substack{\theta_1,\theta_2,\theta_3>0\\ \exists~i,j:~\theta_i\neq\theta_j}}F_{\nu}(\theta_1,\theta_2,\theta_3)
\end{equation}
for each fixed $\nu\in[-1/2,1)$, where $C_{\nu}$ is the coefficient defined in (\ref{cnux}). The rest of this section is devoted to proof of (\ref{cdef}).
\subsection{Step II: Reduction into a two-variable problem} 
We first show that the optimization problem (\ref{cdef})  can be reduced into an optimization problem of two-variable problem. We need the following lemma on the range of $xy+yz+zx$ when the value of $x+y+z$ and $xyz$ are fixed.
\begin{lemma}\cite{Barbara}\label{SPlemma}
	Fix $P,S>0$ such that $P<S^3/27$. 
	Let $x,y,z>0$ vary satisfying the relation:
	\[
	x+y+z=S ~\mbox{ and }~ xyz=P,
	\]  
	Set $k=4P/S^3$ $($so, $0<k<4/27$$)$. 
	Then, we have
	\begin{enumerate}
		\item The range of $xy+yx+zs$ is 
		\[
		\frac{1}{4}S^2(4\alpha-3\alpha^2)\leq xy+yz+zx\leq \frac{1}{4}S^2(4\beta-3\beta^2),
		\]
		where  $\alpha$ and $\beta$ are solutions of $x^2-x^3=k$ in the range $(0,2/3)$ and $(2/3,1)$
		respectively.
		\item The lower bound  is attained if and only if $x,y,z$ are 
		\[
		\frac{1}{2}S\alpha, ~\frac{1}{2}S\alpha, ~S(1-\alpha)
		\]
		in some order, and the upper bound is attained if and only if $x,y,z$ are
		\[
		\frac{1}{2}S\beta,~ \frac{1}{2}S\beta,~ S(1-\beta)
		\] 
		in some order.
	\end{enumerate}
\end{lemma}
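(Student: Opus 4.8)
The plan is to treat the lemma as a constrained optimization problem for the symmetric function $e_2:=xy+yz+zx$ on the feasible set
\[
K=\bigl\{(x,y,z)\in\mathbb{R}^3:\ x,y,z>0,\ x+y+z=S,\ xyz=P\bigr\}.
\]
I would first record that $K$ is compact --- it is bounded since each coordinate lies in $(0,S)$, and closed since any limit point satisfies $xyz=P>0$ and hence has all coordinates positive --- and nonempty, because $h(t):=t^2(S-2t)$ takes every value in $(0,S^3/27]$ as $t$ runs over $(0,S/2)$ while $P<S^3/27$. Thus $e_2$ attains a maximum and a minimum on $K$, and the crux is to prove that every extremizer has two equal coordinates; on that one-parameter sub-family the extreme values can be written down explicitly and matched with the quantities built from $\alpha,\beta$.

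The engine for the ``two equal coordinates'' claim is a Lagrange multiplier argument. The constraint qualification holds at every point of $K$: the gradients $\nabla(x+y+z)=(1,1,1)$ and $\nabla(xyz)=(yz,xz,xy)$ are dependent only if $yz=xz=xy$, i.e.\ $x=y=z=S/3$, which is incompatible with $P<S^3/27$. Hence at an extremizer there are scalars $\lambda,\mu$ with $(y+z,\,x+z,\,x+y)=\lambda(1,1,1)+\mu(yz,xz,xy)$; subtracting components in pairs gives $(y-x)(1-\mu z)=0$, $(z-x)(1-\mu y)=0$ and $(z-y)(1-\mu x)=0$. Were $x,y,z$ pairwise distinct we would get $\mu x=\mu y=\mu z=1$, which is impossible, so at least two of $x,y,z$ coincide at every extremizer.

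It then remains to analyze the family $(t,t,S-2t)$ with $t\in(0,S/2)$ and $t^2(S-2t)=P$. Since $h(t)=t^2(S-2t)$ increases on $(0,S/3)$ and decreases on $(S/3,S/2)$ with maximum value $S^3/27$, the equation $h(t)=P$ has exactly two roots; substituting $t=\tfrac12 S\alpha$ rewrites it as $\alpha^2(1-\alpha)=4P/S^3=k$ and shows the two roots correspond to $\alpha\in(0,2/3)$ and $\beta\in(2/3,1)$, i.e.\ the two solutions of $x^2-x^3=k$ in those intervals, exactly as stated. On this family one has $e_2=2t(S-2t)+t^2=2St-3t^2$, whose values at $t=\tfrac12 S\alpha$ and $t=\tfrac12 S\beta$ are $\tfrac14 S^2(4\alpha-3\alpha^2)$ and $\tfrac14 S^2(4\beta-3\beta^2)$. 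Together with the previous step, these two numbers are precisely the minimum and maximum of $e_2$ on $K$; what remains is to identify which is which and to locate the extremizers.

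For the ordering I would use the factorization $(4\beta-3\beta^2)-(4\alpha-3\alpha^2)=(\beta-\alpha)\bigl(4-3(\alpha+\beta)\bigr)$, reducing everything to the inequality $\alpha+\beta<\tfrac43$. Both $\alpha$ and $\beta$ are roots of $\gamma(t):=t^3-t^2+k$, whose third root $r_0$ is negative; since $\gamma(-\tfrac13)=k-\tfrac{4}{27}<0<k=\gamma(0)$ one has $r_0\in(-\tfrac13,0)$, and because the roots of $\gamma$ sum to $1$ this gives $\alpha+\beta=1-r_0<\tfrac43$. Hence $\tfrac14 S^2(4\alpha-3\alpha^2)$ is the lower bound and $\tfrac14 S^2(4\beta-3\beta^2)$ the upper bound, which is part~(1). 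For part~(2): when $P<S^3/27$ we have $\alpha\neq\beta$, so the two extreme values are distinct; since every extremizer has two equal coordinates and therefore lies on the family above, the minimum is attained only at the permutations of $(\tfrac12 S\alpha,\tfrac12 S\alpha,S(1-\alpha))$ and the maximum only at those of $(\tfrac12 S\beta,\tfrac12 S\beta,S(1-\beta))$. I expect the Lagrange step to be the one that needs care --- one must check the constraint qualification on all of $K$ so that stationarity catches \emph{every} extremum, not merely the interior critical points of some chosen parametrization --- after which the rest is elementary single-variable calculus.
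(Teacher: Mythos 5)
Your proof is correct, but note that the paper does not actually prove this lemma: it is imported from \cite{Barbara} and the in-paper ``proof'' is literally ``See \cite{Barbara}.'' So what you have produced is a self-contained argument where the authors rely on an external citation. Your route --- compactness of the constraint set $K$, Lagrange multipliers with an explicit check of the constraint qualification (the gradients $(1,1,1)$ and $(yz,xz,xy)$ are dependent only at $x=y=z=S/3$, excluded by $P<S^3/27$) to force two equal coordinates at every extremizer, then one-variable calculus on the family $(t,t,S-2t)$ and the identity $(4\beta-3\beta^2)-(4\alpha-3\alpha^2)=(\beta-\alpha)\bigl(4-3(\alpha+\beta)\bigr)$ combined with $\alpha+\beta=1-r_0<4/3$ to decide which bound is which --- is sound at every step; in particular the pairwise subtractions $(y-x)(1-\mu z)=0$, etc., correctly rule out three distinct coordinates, and the localization $r_0\in(-1/3,0)$ of the third root of $t^3-t^2+k$ is right. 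The cited source (and the standard elementary treatment) instead fixes $S$, $P$, $\Delta$ and asks when the monic cubic $t^3-St^2+\Delta t-P$ has three positive roots, reading off the extreme values of $\Delta$ as those at which the cubic acquires a double root; that avoids multivariable calculus and yields the ``two equal coordinates'' conclusion as the double-root condition, at the cost of a bit more algebra. The two arguments deliver the same statement, and part (2) --- the only part of the lemma the paper actually uses, in the proof of Lemma 3.3 together with the remark that $\tfrac12 S\alpha<S(1-\alpha)$ --- comes out of your argument exactly as needed. One cosmetic point: part (1) speaks of the ``range'' of $xy+yz+zx$, and if one insists on the full closed interval rather than just the two attained endpoints one should add that $K$ is connected (a single level curve of $xyz$ in the simplex slice); this is not needed anywhere in the paper.
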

\begin{proof}
	See \cite{Barbara}.
\end{proof}
\begin{remark}
	In Lemma \ref{SPlemma}, we have
	\[
	\frac{1}{2}S\alpha<S(1-\alpha)\quad\mbox{and}\quad \frac{1}{2}S\beta>S(1-\beta).
	\]
\end{remark}
We are now ready to derive our first reduction. 
\begin{lemma}\label{lemma 3.3} For $\nu\in[-1/2,1)$, $F_{\nu}$ satisfies
	\begin{equation*}
	\sup_{\substack{\theta_1,\theta_2,\theta_3>0\\ \exists~i,j:~\theta_i\neq\theta_j}}F_{\nu}(\theta_1,\theta_2,\theta_3)=\sup_{\substack{\theta_1,\theta_2,\theta_3>0\\ \theta_1>\theta_2=\theta_3} }F_{\nu}(\theta_1,\theta_2,\theta_3).
	\end{equation*}
	\end{lemma}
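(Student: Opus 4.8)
The plan is to show that the three-variable supremum is attained (or approached) in the ``degenerate'' configuration where two of the eigenvalues coincide and the third is strictly larger, i.e.\ $\theta_1>\theta_2=\theta_3$. The inequality $\geq$ in the claimed identity is immediate since the right-hand side is a supremum over a subset of the index set of the left-hand side; the content is the reverse inequality $\leq$. For this I would fix an arbitrary triple $(\theta_1,\theta_2,\theta_3)$ with not all entries equal and show $F_\nu(\theta_1,\theta_2,\theta_3)$ is bounded above by some value of $F_\nu$ on the two-equal slice.

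The key idea is to hold the two symmetric functions $S=\theta_1+\theta_2+\theta_3$ and $P=\theta_1\theta_2\theta_3$ fixed and to \emph{vary} the third symmetric function $Q=\theta_1\theta_2+\theta_2\theta_3+\theta_3\theta_1$ within the range permitted by Lemma~\ref{SPlemma}. First I would observe that both the numerator and the denominator of $F_\nu$ can be expressed through $S$, $P$, and $Q$ only: the denominator $3\ln(S/3)-\ln P$ does not involve $Q$ at all and is a fixed positive constant (positive because not all $\theta_i$ are equal, by AM--GM); the numerator, after expanding the product $\prod_i\{(1-\nu)\frac{S}{3}+\nu\theta_i\}$, becomes a polynomial in $\nu$ whose coefficients are elementary symmetric functions of the $\theta_i$, hence expressible via $S,P,Q$. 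Writing $m=S/3$, one computes
\[
\prod_{i=1}^3\Bigl((1-\nu)m+\nu\theta_i\Bigr)=(1-\nu)^3m^3+(1-\nu)^2m^2\nu S+(1-\nu)m\nu^2 Q+\nu^3 P,
\]
so the numerator of $F_\nu$ depends on $Q$ only through the single linear term $(1-\nu)m\nu^2 Q$. Since the denominator is a fixed positive number, $F_\nu$ is therefore a \emph{monotone} function of $Q$ (decreasing in $Q$, because $Q$ enters the argument of a logarithm that is being subtracted in the numerator, and the coefficient $(1-\nu)m\nu^2\geq 0$ for $\nu\in[-1/2,1)$). Consequently $F_\nu$ is maximized over the constraint set $\{S,P \text{ fixed}\}$ exactly when $Q$ is minimized, and Lemma~\ref{SPlemma} tells us the minimum of $Q=xy+yz+zx$ under fixed $S,P$ is attained precisely at a triple of the form $\tfrac12 S\alpha,\tfrac12 S\alpha,S(1-\alpha)$ with $\tfrac12 S\alpha<S(1-\alpha)$, i.e.\ exactly a triple with two equal smaller entries and one strictly larger entry. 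This is the configuration $\theta_1>\theta_2=\theta_3$.

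To finish, I would note that every triple with not all $\theta_i$ equal has some $S>0$ and $P\in(0,S^3/27)$, so Lemma~\ref{SPlemma} applies and produces, in its two-equal-eigenvalue extremal configuration, a triple $(\theta_1',\theta_2',\theta_3')$ with $\theta_1'>\theta_2'=\theta_3'$, the same $S$ and $P$, and $Q'\leq Q$; by the monotonicity just established, $F_\nu(\theta_1,\theta_2,\theta_3)\leq F_\nu(\theta_1',\theta_2',\theta_3')$, and the latter lies in the set over which the right-hand supremum is taken. Taking the supremum over all admissible $(\theta_1,\theta_2,\theta_3)$ yields the reverse inequality, completing the proof. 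The main obstacle I anticipate is the bookkeeping in the case $\nu=0$ (where the coefficient $(1-\nu)m\nu^2$ vanishes and $F_\nu\equiv 0$, so the statement is trivially true but the monotonicity argument degenerates) and, more substantively, making sure the denominator is genuinely nonzero and of a fixed sign throughout the constraint set so that the ``monotone in $Q$'' conclusion is valid after dividing by it --- this uses that not all $\theta_i$ are equal is preserved under the $Q$-variation, which holds because the extremal triples from Lemma~\ref{SPlemma} still have distinct entries (as noted in the Remark, $\tfrac12 S\alpha\neq S(1-\alpha)$).
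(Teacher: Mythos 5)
Your proposal is correct and follows essentially the same route as the paper: you expand $\prod_i\{(1-\nu)\tfrac{S}{3}+\nu\theta_i\}$ in the elementary symmetric functions, observe that only the $Q$-term varies (with nonnegative coefficient $\nu^2(1-\nu)\tfrac{S}{3}$), and invoke Lemma \ref{SPlemma} to minimize $Q$ at a triple of the form $\theta_1'>\theta_2'=\theta_3'$ with $S$ and $P$ unchanged, which is exactly the paper's argument. Your extra remarks on the positivity of the denominator and the degenerate case $\nu=0$ are sound but not needed beyond the weak monotonicity in $Q$ that the paper also uses.
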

\begin{proof}
 From the obvious relation
	\[
	\big\{(\theta_1,\theta_2,\theta_3)\in \mathbb R_+^3:\exists~i,j:\theta_i\neq\theta_j\big\}\supset\big\{(\theta_1,\theta_2,\theta_3)\in\mathbb R_+^3:\theta_1>\theta_2=\theta_3\big\},
	\]
	we have
	\begin{equation}\label{we have}
	\sup_{\substack{\theta_1,\theta_2,\theta_3>0\\ \exists~i,j:~\theta_i\neq\theta_j}}F_{\nu}(\theta_1,\theta_2,\theta_3)\geq\sup_{\substack{\theta_1,\theta_2,\theta_3>0\\ \theta_1>\theta_2=\theta_3} }F_{\nu}(\theta_1,\theta_2,\theta_3).
	\end{equation}
	Therefore, it suffices to prove the reverse inequality. For this, we fix $\theta_1,\theta_2,\theta_3>0$ satisfying $\theta_i\neq\theta_j$ for some $i,j$ and set
	\[
	S:=\theta_1+\theta_2+\theta_3,\quad \Delta:=\theta_1\theta_2+\theta_2\theta_3+\theta_3\theta_1 \quad\mbox{and}\quad  P:=\theta_1\theta_2\theta_3.
	\]
	By Lemma \ref{SPlemma}, we can choose $\tilde\theta_1,\tilde\theta_2,\tilde\theta_3>0$ such that $\tilde\theta_1>\tilde\theta_2=\tilde\theta_3$ and
	\[
	S=\tilde\theta_1+\tilde\theta_2+\tilde\theta_3,\quad \Delta\geq \tilde\theta_1\tilde\theta_2+\tilde\theta_2\tilde\theta_3+\tilde\theta_3\tilde\theta_1 \quad\mbox{and}\quad  P=\tilde\theta_1\tilde\theta_2\tilde\theta_3.
	\]
For such choice of $\tilde{\theta}_i$, we observe
	\begin{align*}
	\prod\limits_{i=1}^3\left\{(1-\nu)\bigg(\frac{S}{3}\bigg)+\nu\theta_i\right\}&=(1-\nu)^3\bigg(\frac{S}{3}\bigg)^3+\nu(1-\nu)^2(\theta_1+\theta_2+\theta_3)\bigg(\frac{S}{3}\bigg)^2\\
	&\quad+\nu^2(1-\nu)(\theta_1\theta_2+\theta_2\theta_3+\theta_3\theta_1)\bigg(\frac{S}{3}\bigg)+\nu^3P\\
	&\geq(1-\nu)^3\bigg(\frac{S}{3}\bigg)^3+\nu(1-\nu)^2(\tilde\theta_1+\tilde\theta_2+\tilde\theta_3)\bigg(\frac{S}{3}\bigg)^2\\
	&\quad+\nu^2(1-\nu)(\tilde\theta_1\tilde\theta_2+\tilde\theta_2\tilde\theta_3+\tilde\theta_3\tilde\theta_1)\bigg(\frac{S}{3}\bigg)+\nu^3P\\
	&=\prod\limits_{i=1}^3\left\{(1-\nu)\bigg(\frac{S}{3}\bigg)+\nu\tilde\theta_i\right\}.	
	\end{align*}
Hence we have	
	\begin{align*}
F_{\nu}(\theta_1,\theta_2,\theta_3)&=\frac{3\ln \frac{S}{3}-\ln \left[\prod\limits_{i=1}^3\left\{(1-\nu)\frac{S}{3}+\nu\theta_i\right\}\right] }{3\ln \frac{S}{3}- \ln P}\\
&\leq \frac{3\ln \frac{S}{3}-\ln \left[\prod\limits_{i=1}^3\left\{(1-\nu)\frac{S}{3}+\nu\tilde\theta_i\right\}\right] }{3\ln \frac{S}{3}- \ln P}\\
&=F_{\nu}(\tilde\theta_1,\tilde\theta_2,\tilde\theta_3),
\end{align*}
which gives
	\begin{equation}\label{we have2}
	\sup_{\substack{\theta_1,\theta_2,\theta_3>0\\ \exists~i,j:~\theta_i\neq\theta_j}}F_{\nu}(\theta_1,\theta_2,\theta_3)\leq\sup_{\substack{\theta_1,\theta_2,\theta_3>0\\ \theta_1>\theta_2=\theta_3} }F_{\nu}(\theta_1,\theta_2,\theta_3).
	\end{equation}
	Combination of (\ref{we have}) and (\ref{we have2}) establishes the desired result.
\end{proof}

\subsection{Step III: Reduction into a one-variable problem} 
In the following lemma, we reduce the problem further into a one-variable optimization problem to complete
the proof of Lemma \ref{main lemma}.
\begin{lemma}\label{prop1}
	Let $\nu\in[-1/2,1)$, then we have
	\begin{align*}
	\sup_{\substack{\theta_1,\theta_2,\theta_3>0\\ \exists~i,j:~\theta_i\neq\theta_j}}F_{\nu}(\theta_1,\theta_2,\theta_3)
	&=C_{\nu}.
	\end{align*}
	for $C_{\nu}$ defined in (\ref{cnux}) as the maximum value of an one-variable optimization problem. 
\end{lemma}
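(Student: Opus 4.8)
The plan is to exploit the scale invariance $F_{\nu}(\theta_1,\theta_2,\theta_3)=F_{\nu}(k\theta_1,k\theta_2,k\theta_3)$ for all $k>0$, which is immediate from the definition of $F_{\nu}$, together with Lemma \ref{lemma 3.3}, which has already reduced the left-hand side to $\sup\{F_{\nu}(\theta_1,\theta_2,\theta_2):\theta_1>\theta_2>0\}$. Choosing $k=1/\theta_2$ and writing $t:=\theta_1/\theta_2\in(1,\infty)$, the scaling collapses this to the one-variable problem $\sup_{t>1}F_{\nu}(t,1,1)$, so that the entire content of Lemma \ref{prop1} becomes the identity $\sup_{t>1}F_{\nu}(t,1,1)=C_{\nu}$, with $C_{\nu}$ as in \eqref{cnux}.

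First I would compute $F_{\nu}(t,1,1)$ explicitly. Here $(\theta_1+\theta_2+\theta_3)/3=(t+2)/3$ and $\theta_1\theta_2\theta_3=t$, and an elementary computation gives $(1-\nu)\tfrac{t+2}{3}+\nu\theta_i=\tfrac13\big((1+2\nu)t+2(1-\nu)\big)$ for $i=1$ and $=\tfrac13\big((1-\nu)t+(2+\nu)\big)$ for $i=2,3$. Then I would introduce the change of variable $x:=t-1\in(0,\infty)$, tailored so that $(t+2)/3=1+\tfrac{x}{3}$; a direct check shows the two factors above become $1+\tfrac{1+2\nu}{3}x$ and $1+\tfrac{1-\nu}{3}x$ respectively, while $\theta_1\theta_2\theta_3$ becomes $1+x$. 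Substituting these into the definition of $F_{\nu}$ shows that $F_{\nu}(t,1,1)$ is precisely the function of $x$ appearing inside the supremum in \eqref{cnux}. Since $t\mapsto x=t-1$ is a bijection of $(1,\infty)$ onto $(0,\infty)$, this yields $\sup_{t>1}F_{\nu}(t,1,1)=C_{\nu}$, and together with Lemma \ref{lemma 3.3} it completes the proof.

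The remaining points are routine verifications of well-definedness. One checks that all arguments of the logarithms are positive for every $\nu\in[-1/2,1)$ and $x>0$: the factors $1+\tfrac{x}{3}$, $1+x$, and $1+\tfrac{1-\nu}{3}x$ are positive trivially (using $1-\nu>0$), while $1+\tfrac{1+2\nu}{3}x>0$ since $1+2\nu\ge0$ on the admissible range. One also checks that the denominator $3\ln(1+\tfrac{x}{3})-\ln(1+x)=3\ln\tfrac{t+2}{3}-\ln t$ is strictly positive for $x>0$, by the strict arithmetic--geometric mean inequality applied to $t,1,1$, so that the ratio $F_{\nu}(t,1,1)$ is well defined. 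I do not expect any serious obstacle, but the delicate point---precisely the one at which the argument of \cite{Yun Entropy} broke down---is the non-degeneracy at the critical value $\nu=-1/2$. It survives in the present formulation because at $\nu=-1/2$ the potentially vanishing factor $1+\tfrac{1+2\nu}{3}x$ collapses to the harmless constant $1$ and contributes to the numerator of $F_{\nu}$ rather than to the denominator, which stays strictly positive for all $x>0$; hence $C_{\nu}$ remains well defined, and the precise point realizing or approaching this supremum is analyzed in Section 4.
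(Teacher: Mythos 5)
Your proposal is correct and follows essentially the same route as the paper: apply Lemma \ref{lemma 3.3}, use the scale invariance with $k=1/\theta_2$ to pass to the single variable $t=\theta_1/\theta_2$, and substitute $x=t-1$ to identify $F_\nu(t,1,1)$ with the ratio appearing in \eqref{cnux}. Your explicit computation of the factors $1+\tfrac{1+2\nu}{3}x$ and $1+\tfrac{1-\nu}{3}x$ matches the paper's (and in fact corrects a typographical $\tfrac{1+2\nu}{2}$ appearing in the paper's displayed intermediate steps).
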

\begin{proof}
	By Lemma \ref{lemma 3.3}, it is enough to consider
	
	\begin{align}
	\begin{split}
	F_{\nu}(\theta_1,\theta_2,\theta_2)=  \frac{3\ln \left(\frac{\theta_1+2\theta_2}{3}\right)-\ln \left\{(1-\nu)\frac{\theta_1+\theta_2+\theta_3}{3}+\nu\theta_1\right\}
	\left\{(1-\nu)\frac{\theta_1+2\theta_2}{3}+\nu\theta_2\right\}^2 }{3\ln \left(\frac{\theta_1+2\theta_2}{3}\right)- \ln(\theta_1\theta^2_2)}.
	\end{split}
	\end{align}
    for $\theta_1>\theta_2$ without loss of generality.
	
If we denote by $F_{\nu,1}$, $F_{\nu,2}$ the numerator and the denominator of $F_{\nu}$ respectively, we observe the following scaling property:
	\[
	F_{\nu,i}(\theta_1,\theta_2,\theta_3)=F_{\nu,i}(k\theta_1,k\theta_2,k\theta_3),\quad\forall~k>0.
	\]
	Therefore, taking $k=\theta^{-1}_2$, we get
		
	\begin{align}
	\begin{split}
	&F_{\nu,1}(\theta_1,\theta_2,\theta_2)\\
	&\qquad=  3\ln \left(\frac{\theta_1+2\theta_2}{3}\right)-\ln \left[(1-\nu)\frac{\theta_1+2\theta_2}{3}+\nu\theta_1\right] 
	\left[(1-\nu)\frac{\theta_1+2\theta_2}{3}+\nu\theta_2\right]^2\cr  
		&\qquad=  3\ln \left(\frac{\frac{\theta_1}{\theta_2}+2}{3}\right)-\ln \left[(1-\nu)\frac{\frac{\theta_1}{\theta_2}+2}{3}+\nu\frac{\theta_1}{\theta_2}\right]
	\left[(1-\nu)\frac{\frac{\theta_1}{\theta_2}+2}{3}+\nu\right]^2\cr
	 	&\qquad=  3\ln \left[1+\frac{1}{3}\left(\frac{\theta_1}{\theta_2}-1\right)\right]-\ln \left[1+\frac{1+2\nu}{2}\left(\frac{\theta_1}{\theta_2}-1\right)\right]
	 \left[1+\frac{1-\nu}{3}\left(\frac{\theta_1}{\theta_2}-1\right)\right]^2.
	\end{split}
	\end{align}
	We then set 
	\[
	x=\frac{\theta_1}{\theta_2}-1
	\]
	to obtain
		\begin{align}
	F_{\nu,1}(\theta_1,\theta_2,\theta_2)=  3\ln \left(1+\frac{1}{3}x\right)-\ln \left(1+\frac{1+2\nu}{2}x\right)
	-2\ln\left(1+\frac{1-\nu}{3}x\right).
	\end{align}
Similarly, we have
		
		\begin{align}
		F_{\nu,2}(\theta_1,\theta_2,\theta_2)=3\ln \left(1+\frac{1}{3}x\right)- \ln(1+x).
		\end{align}
This gives the desired result.
\end{proof}

%
%
%
%

\section{Further consideration of $C_{\nu}$}
In this section, we characterize the value  at which the optimal value of $C_{\nu}$ is attained. 
For this, we define
\[
G_\nu(x):=\frac{\displaystyle3\ln \left(1+\frac{1}{3}x\right)- \ln \left(1+\frac{1+2\nu}{3}x \right)-2\ln \left(1+\frac{1-\nu}{3}x \right)}{\displaystyle3\ln \left(1+\frac{1}{3}x\right)-  \ln(1+x)}
\]
and let $G_{\nu,1}$ and $G_{\nu,2}$ denote the numerator and the denominator of $G_{\nu}$ respectively.
\begin{proposition}\label{prop2}
	For $\nu\in[-1/2,1)$, let $C_\nu$ be given by the relation \eqref{cdef}. Then we have\newline
	\noindent (1) $C_{\nu}=0$ if $\nu=0.$\newline
	\noindent (2) $C_{\nu}=1/2$ if $\nu=-1/2$\newline
	\noindent (3) If $\nu\in (-1/2,0)\cup(0,1)$, then $C_\nu$ is characterized by 
	  \[C_{\nu}= \frac{G_{\nu,1}(x_\nu)}{G_{\nu,2}(x_\nu)},\] 
	  where 
	  $0<x_\nu<\infty$ is the unique solution to
	  \[ 
	  \frac{G_{\nu,1}(x_\nu)}{G_{\nu,2}(x_\nu)}=\frac{G'_{\nu,1}(x_\nu)}{G'_{\nu,2}(x_\nu)}.
	  \]
\end{proposition}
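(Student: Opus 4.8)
The plan is to treat the three cases of Proposition \ref{prop2} in turn, the real work being in part (3).

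\textbf{Parts (1) and (2).} If $\nu=0$ the three logarithms in the numerator have the same argument, so $G_{0,1}\equiv0$, hence $G_0\equiv0$ and $C_0=\sup_{x>0}G_0(x)=0$. If $\nu=-1/2$ the factor $1+2\nu$ vanishes, leaving $G_{-1/2,1}(x)=3\ln(1+\tfrac{x}{3})-2\ln(1+\tfrac{x}{2})$ and $G_{-1/2,2}(x)=3\ln(1+\tfrac{x}{3})-\ln(1+x)$; expanding for large $x$ gives $G_{-1/2,1}(x)=\ln x+O(1)$ and $G_{-1/2,2}(x)=2\ln x+O(1)$, so $\lim_{x\to\infty}G_{-1/2}(x)=\tfrac12$. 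Together with the bound $G_{-1/2}(x)\le\widetilde{C}_{-1/2}=\tfrac13\cdot\tfrac14\cdot6=\tfrac12$ for all $x>0$, which is exactly \eqref{inequal} at $\nu=-1/2$ (Theorem \ref{main result}(1)), this yields $C_{-1/2}=\tfrac12$; this also proves the claim, promised in the remark after the proof of Theorem \ref{main result}(1), that equality holds in \eqref{inequal} at $\nu=-1/2$.

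\textbf{Part (3): existence of an interior maximizer.} Fix $\nu\in(-1/2,0)\cup(0,1)$ and write $a=\tfrac{1+2\nu}{3}>0$, $b=\tfrac{1-\nu}{3}>0$, so $a+2b=1$. Expanding $G_{\nu,1}$ and $G_{\nu,2}$ about $x=0$, the linear terms cancel and the quadratic coefficients equal $\tfrac{\nu^2}{3}$ and $\tfrac13$, so $G_\nu(0^+)=\nu^2$; one order further gives $G_\nu(x)=\nu^2\bigl(1+\tfrac{2(1-\nu)}{9}x+O(x^2)\bigr)$, hence (as $\nu\neq0$, $\nu<1$) $G_\nu$ is strictly increasing at the origin. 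As $x\to\infty$ the coefficient of $\ln x$ in $G_{\nu,1}$ is $3-1-2=0$, so $G_{\nu,1}$ stays bounded while $G_{\nu,2}\to\infty$, whence $G_\nu(+\infty)=0<\nu^2$. Since $G_\nu$ is continuous on $(0,\infty)$ and bounded above by $\widetilde{C}_\nu$, its supremum $C_\nu$ is finite, exceeds $\nu^2$, and is attained at an interior point, necessarily a critical point.

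\textbf{Part (3): uniqueness of the critical point.} The key calculation is the simplification
\[
G'_{\nu,1}(x)=\frac{\tfrac{2}{3}\nu^2\,x}{(1+\tfrac{x}{3})(1+ax)(1+bx)},\qquad G'_{\nu,2}(x)=\frac{\tfrac{2}{3}x}{(1+\tfrac{x}{3})(1+x)},
\]
obtained by writing $G'_{\nu,1}=\tfrac{1}{1+x/3}-\tfrac{a}{1+ax}-\tfrac{2b}{1+bx}$ over a common denominator and noting that the constant and the $x^2$ coefficients of the numerator both vanish (using $a+2b=1$ and $2a+b-9ab=2\nu^2$). In particular $G'_{\nu,2}>0$ and $G_{\nu,2}>0$ on $(0,\infty)$, and the ratio of derivatives is the rational function
\[
R(x):=\frac{G'_{\nu,1}(x)}{G'_{\nu,2}(x)}=\frac{\nu^2(1+x)}{(1+ax)(1+bx)},
\]
with $R(0)=\nu^2$, $R(+\infty)=0$, and $R'(x)=\nu^2\dfrac{(1-a-b)-2ab\,x-ab\,x^2}{[(1+ax)(1+bx)]^2}$, whose numerator is a downward parabola with value $1-a-b=\tfrac{1-\nu}{3}>0$ at $x=0$; thus $R$ is strictly increasing on $(0,x^*)$ and strictly decreasing on $(x^*,\infty)$ for a unique $x^*>0$. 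Now the critical points of $G_\nu$ on $(0,\infty)$ are exactly the zeros of $\Phi:=R-G_\nu$, because $G_\nu'=\tfrac{G'_{\nu,2}}{G_{\nu,2}}\Phi$; and since $\Phi'=R'-G_\nu'=R'-\tfrac{G'_{\nu,2}}{G_{\nu,2}}\Phi$, one has $\Phi'=R'$ at every zero of $\Phi$. Hence $\Phi$ can change sign from $+$ to $-$ only in $[x^*,\infty)$ (where $R'\le0$) and from $-$ to $+$ only in $(0,x^*]$ (where $R'\ge0$). By the expansion above $\Phi(x)\sim\tfrac{\nu^2(1-\nu)}{9}x>0$ near $0$, and $\Phi$ is negative somewhere (otherwise $G_\nu'\ge0$ on $(0,\infty)$, contradicting $G_\nu(0^+)=\nu^2>0=G_\nu(+\infty)$). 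Let $x_\nu$ be the first zero of $\Phi$. It cannot be a point where $\Phi$ touches $0$ from above, for then $x_\nu$ would be a local minimum of $\Phi$, forcing $\Phi'(x_\nu)=R'(x_\nu)=0$, hence $x_\nu=x^*$, and then $\Phi''(x_\nu)=R''(x^*)<0$ — a contradiction; so $\Phi$ crosses from $+$ to $-$ at $x_\nu$, whence $x_\nu\ge x^*$. A further zero would be a $-$-to-$+$ crossing at a point $>x_\nu\ge x^*$, which is excluded. Therefore $\Phi$ has exactly one zero $x_\nu\in(0,\infty)$, with $\Phi>0$ on $(0,x_\nu)$ and $\Phi<0$ on $(x_\nu,\infty)$; so $G_\nu$ strictly increases then strictly decreases, $x_\nu$ is its unique critical point and unique maximizer, $C_\nu=G_\nu(x_\nu)=G_{\nu,1}(x_\nu)/G_{\nu,2}(x_\nu)$, and $x_\nu$ is the unique solution of $G_{\nu,1}(x)/G_{\nu,2}(x)=G'_{\nu,1}(x)/G'_{\nu,2}(x)$.

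\textbf{Main obstacle.} The hard part is the uniqueness in part (3): clearing denominators in $G_\nu'(x)=0$ does \emph{not} produce a polynomial equation, since $G_{\nu,1}$ and $G_{\nu,2}$ are genuinely transcendental. The way around this is that the \emph{ratio of derivatives} $R=G'_{\nu,1}/G'_{\nu,2}$ collapses to a simple, unimodal rational function, and the identity $\Phi'=R'$ at zeros of $\Phi=R-G_\nu$ then forces the sign pattern of $G_\nu'$. Extracting the clean closed form for $R$ — the double cancellation in the numerators of $G'_{\nu,1}$ and $G'_{\nu,2}$ — is the one computational step that must be done carefully.
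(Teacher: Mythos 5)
Your proof is correct. For part (3) it is essentially the paper's argument in lightly different packaging: the paper also reduces everything to the closed form $G'_{\nu,1}/G'_{\nu,2}=\nu^2(1+x)/\bigl[(1+\tfrac{1+2\nu}{3}x)(1+\tfrac{1-\nu}{3}x)\bigr]$ and its unimodality, but it tracks the sign of $\Psi:=G_{\nu,2}\,\frac{G'_{\nu,1}}{G'_{\nu,2}}-G_{\nu,1}$, whose derivative is $G_{\nu,2}R'$ \emph{everywhere}, so the increasing-then-decreasing shape of $\Psi$ (from $\Psi(0^+)=0$ to the negative limit $\ln\{1-\nu^2(3-2\nu)\}$) gives the unique sign change by the intermediate value theorem; your $\Phi=R-G_\nu$ is just $\Psi/G_{\nu,2}$, and your local analysis at zeros of $\Phi$ (using $\Phi'=R'$ there, plus the second-order check at $x^*$) reaches the same conclusion with slightly more case-work. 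Where you genuinely diverge is part (2): the paper shows $G_{-1/2}$ is strictly increasing on $(0,\infty)$ (since $R'>0$ for all $x$ when $1+2\nu=0$) and evaluates $\lim_{x\to\infty}G_{-1/2}(x)$ by L'H\^opital, whereas you compute the limit $1/2$ directly from the $\ln x$ asymptotics and cap the supremum by $\widetilde C_{-1/2}=1/2$ using Theorem \ref{main result}(1); this is a legitimate shortcut (the algebraic proof of \eqref{inequal} in Section 2.1 does not depend on Proposition \ref{prop2}, so there is no circularity), and it simultaneously delivers the equality $C_{-1/2}=\widetilde C_{-1/2}$ promised in the remark there, but it yields slightly less information than the paper's route, which also shows the sup at $\nu=-1/2$ is not attained at any finite $x$.
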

\begin{proof}
	\noindent (1) When $\nu=0$,  $G_{\nu,1}(x)\equiv0$, Therefore,  by Lemma \ref{prop1}, we have $C_\nu=0$.\\\\
\noindent (2) The case $\nu=-\frac{1}{2}$: 
	Note that
\begin{align*}
G'_{\nu,1}(x)&=  \frac{1}{1+\frac{1}{3}x}- \frac{1+2\nu}{3}\frac{1}{1+\frac{1+2\nu}{3}x}-2\frac{1-\nu}{3}\frac{1}{1+\frac{1-\nu}{3}x }\\
&=\frac{\frac{2}{3}\nu^2x}{(1+\frac{1}{3}x)(1+\frac{1+2\nu}{3}x)(1+\frac{1-\nu}{3}x)},\\
G'_{\nu,2}(x)&=\frac{1}{1+\frac{1}{3}x}-  \frac{1}{1+x}=\frac{\frac{2}{3}x}{(1+\frac{1}{3}x)(1+x)},
\end{align*}
so that
\begin{align*}
\frac{G'_{\nu,1}(x)}{G'_{\nu,2}(x)}&=\frac{ \nu^2(1+x)}{\left(1+\frac{1+2\nu}{3}x\right)\left(1+\frac{1-\nu}{3}x\right)},
\end{align*}
and
\begin{align}\label{and}
\frac{d}{dx}\left( \frac{G'_{\nu,1}(x)}{G'_{\nu,2}(x)} \right)&=\frac{ \nu^2(\frac{1-\nu}{3})\left\{\frac{4+2\nu}{3}-(\frac{1+2\nu}{3})(1+x)^2\right\}}{\left(1+\frac{1+2\nu}{3}x\right)^2
	\left(1+\frac{1-\nu}{3}x\right)^2}.
\end{align}

Therefore, when $\nu=-1/2$, we have 
	\begin{align*}
	\frac{d}{dx}&\left( \frac{G'_{\nu,1}(x)}{G'_{\nu,2}(x)} \right)=\frac{1}{8(1+\frac{1}{2}x)^2}>0.
	\end{align*}
and
	\begin{align*}
	\frac{d}{dx}\left(G_{\nu,2}(x) \frac{G'_{\nu,1}(x)}{G'_{\nu,2}(x)}-G_{\nu,1}(x)  \right)=G_{\nu,2}(x)\frac{d}{dx}\left( \frac{G'_{\nu,1}(x)}{G'_{\nu,2}(x)} \right)>0,\quad\forall~x>0.
	\end{align*}
	This implies that $G_{\nu,2}(x) \frac{G'_{\nu,1}(x)}{G'_{\nu,2}(x)}-G_{\nu,1}(x)$ is a strictly increasing function, so that
	\begin{align*}
	G_{\nu,2}(x) \frac{G'_{\nu,1}(x)}{G'_{\nu,2}(x)}-G_{\nu,1}(x) >\lim_{x\searrow0}\left( G_{\nu,2}(x) \frac{G'_{\nu,1}(x)}{G'_{\nu,2}(x)}-G_{\nu,1}(x) \right)=0\cdot \frac{1}{4}-0=0,\quad\forall~x>0.
	\end{align*}
	Hence we have 
	\begin{equation*}
	\begin{split}
	\frac{d}{dx}\left( \frac{G_{\nu,1}(x)}{G_{\nu,2}(x)} \right)=\frac{G'_{\nu,2}(x)}{(G_{\nu,2}(x))^2} \left(G_{\nu,2}(x)\frac{G'_{\nu,1}(x)}{G'_{\nu,2}(x)}-G_{\nu,1}(x)  \right)>0,\quad\forall~x>0.
	\end{split}
	\end{equation*}
	This shows that $G_{\nu,1}(x)/G_{\nu,2}(x)$ also is a strictly increasing function. Therefore, we have
	\begin{align*}
	C_{-\frac{1}{2}}&=\lim_{x\rightarrow\infty}\frac{G_{\nu,1}(x)}{G_{\nu,2}(x)},
	\end{align*}
	and the use of L'Hospital's rule gives the desired result:
	\begin{align*}
	C_{-\frac{1}{2}}&=\lim_{x\to\infty}\frac{G'_{\nu,1}(x)}{G'_{\nu,2}(x)}\cr
	&=\lim_{x\to\infty}\frac{1/\left(1+\frac{1}{3}x\right)-  1/\left(1+\frac{1}{2}x \right)}{1/\left(1+\frac{1}{3}x\right)-  1/(1+x)}\\
	&=\frac{1}{2}.
	\end{align*}
	\noindent(3) $ \nu\in\left(-\frac{1}{2},0\right)\cup(0,1)$: Set $x_*:=\sqrt{\frac{4+2\nu}{1+2\nu}}-1$. Then we observe from (\ref{and}) that 
	\[
	\frac{d}{dx}\left(G_{\nu,2}(x) \frac{G'_{\nu,1}(x)}{G'_{\nu,2}(x)}-G_{\nu,1}(x)  \right)=G_{\nu,2}(x)\frac{d}{dx}\left( \frac{G'_{\nu,1}(x)}{G'_{\nu,2}(x)} \right)\begin{cases}
	\displaystyle >0\quad\mbox{if}\quad 0<x<x_*,\\
	\displaystyle <0\quad\mbox{if}\quad x_*<x<\infty.
	\end{cases}
	\]
	Hence
	\[  G_{\nu,2}(x) \frac{G'_{\nu,1}(x)}{G'_{\nu,2}(x)}-G_{\nu,1}(x) >\lim_{x\searrow0}\left( G_{\nu,2}(x) \frac{G'_{\nu,1}(x)}{G'_{\nu,2}(x)}-G_{\nu,1}(x) \right)=0\cdot \nu^2-0=0,\quad\forall~x\in(0,x_*].
	\]
	Note that
	\begin{align*}
	\lim_{x\to\infty}\left(G_{\nu,2}(x) \frac{G'_{\nu,1}(x)}{G'_{\nu,2}(x)}-G_{\nu,1}(x)\right) &=0-\left[3\ln \left(\frac{1}{3}\right)- \ln \left(\frac{1+2\nu}{3} \right)-2\ln \left(\frac{1-\nu}{3} \right)\right]\\
	&=\ln\{1-\nu^2(3-2\nu)\}<0.
	\end{align*}
	By the intermediate value theorem, there exists  $x_\nu\in(x_*,\infty)$ satisfying
	\[
	G_{\nu,2}(x) \frac{G'_{\nu,1}(x)}{G'_{\nu,2}(x)}-G_{\nu,1}(x)  \begin{cases}
	\displaystyle >0\quad\mbox{if}\quad 0<x<x_\nu,\\
	\displaystyle =0\quad\mbox{if}\quad x=x_\nu,\\
	\displaystyle <0\quad\mbox{if}\quad x_\nu<x<\infty.
	\end{cases}
	\]
	Therefore,
	\[
	\frac{d}{dx}\left( \frac{G_{\nu,1}(x)}{G_{\nu,2}(x)} \right)=\frac{G'_{\nu,2}(x)}{(G_{\nu,2}(x))^2} \left(G_{\nu,2}(x)\frac{G'_{\nu,1}(x)}{G'_{\nu,2}(x)}-G_{\nu,1}(x)  \right)  \begin{cases}
	\displaystyle >0\quad\mbox{if}\quad 0<x<x_\nu,\\
	\displaystyle =0\quad\mbox{if}\quad x=x_\nu,\\
	\displaystyle <0\quad\mbox{if}\quad x_\nu<x<\infty,
	\end{cases}
	\]
	from which we find
	\[
	C_{\nu}=\sup_{0<x<\infty}\frac{G_{\nu,1}(x)}{G_{\nu,2}(x)}= \frac{G_{\nu,1}(x_\nu)}{G_{\nu,2}(x_\nu)}.
	\]
\end{proof}


\noindent{\bf Acknowledgement:}
Doheon Kim was supported by a KIAS Individual Grant (MG073901) at Korea Institute for Advanced Study.
The research of Seok-Bae Yun was supported by Samsung Science and Technology Foundation under Project Number SSTF-BA1801-02.

\end{document}